\DeclareRobustCommand*\cal{\@fontswitch\relax\mathcal}
\newtheorem{thm}{Theorem}
\newtheorem{definition}{Definition}
\newtheorem{notation}[definition]{Notation}
\newtheorem{rem}[definition]{Remark}
\newtheorem{lem}[definition]{Lemma}
\newtheorem{cor}[definition]{Corollary}
\newtheorem{fact}[definition]{Fact}
\newtheorem{quest}{Question}
\newcommand{\br}{\overline}
\newcommand{\A}{\mathcal A}
\newcommand{\B}{\mathcal B}
\newcommand{\C}{\mathcal C}
\newcommand{\G}{\mathcal G}
\newcommand{\K}{\mathcal K}
\newcommand{\free}{F_{\aleph_0}}
\newcommand{\Tfree}{T_{\aleph_0}}
\newcommand{\pfree}{F_{\Rmod}}
\newcommand{\Tpfree}{T_{\Rmod}}
\newcommand{\pp}{\mathop{\rm pp}}
\newcommand{\RMod}{R\textrm{-Mod}}
\newcommand{\Rmod}{R\textrm{-mod}}
\newcommand{\Add}{\mathop{{\rm Add}}\nolimits}
\newcommand{\add}{\mathop{{\rm add}}\nolimits}
\newcommand{\Th}{\mathop{{\rm Th}}\nolimits}
\renewcommand{\phi}{\varphi}
\begin{document}
\title{
Bass modules and embeddings into free modules}
\author{Anand Pillay and Philipp Rothmaler}
\subjclass[2020]{Primary: 	16D40. 	Secondary: 16B70, 16P70, 13L05}
\keywords{Perfect, pure-semisimple rings; free, projective, flat, pure-projective, Mittag-Leffler, cyclically presented, cyclic finitely presented modules}
\date{\today}
\maketitle
\begin{abstract} We show that the free module of infinite rank $R^{(\kappa)}$ purely embeds every $\kappa$-generated flat left $R$-module iff $R$ is left perfect. Using a Bass module corresponding to a descending chain of principal right ideals, we construct a model of the theory $T$ of $R^{(\kappa)}$ whose projectivity is equivalent to left perfectness, which allows to add a `stronger' equivalent condition: $R^{(\kappa)}$ purely embeds every $\kappa$-generated flat left $R$-module which is a model of $T$. 

We extend the model-theoretic construction of this Bass module to arbitrary descending chains of pp formulas, resulting in a `Bass theory' of pure-projective modules. We put this new theory to use by, among other things, reproving an old result of Daniel Simson about pure-semisimple rings and Mittag-Leffler modules.

This paper is a condensed version, solely about modules, of the larger work \cite{arxiv} with two new results added about cyclically presented modules (Cor.14) and finitely presented cyclic modules (Rem.15).
\end{abstract}

\footnotetext{The first author was partially supported by NSF grants DMS-1665035,
DMS-1760212, and DMS-2054271.}

\section{Introduction} 
Bass \cite{Ba} showed how an infinite descending chain of principal right ideals gives rise to a countably generated flat left module that is not projective---a so-called \emph{Bass module}. \cite{PR} provided a model-theoretic proof of this, which we extend here to construct 
 \emph{generalized Bass modules}---countable direct limits that are not pure-projective---for any given descending chain of positive primitive (pp) formulas, Thm.\,\ref{M}. 
Part of Bass' characterization of one-sided perfect rings follows as a special case, Cor.\,\ref{BassThm}. 

Another novelty is that, in case of Bass' original result, we obtain such a Bass \emph{model}  of the first-order theory of the free modules of infinite rank.  In the general case we find a generalized Bass module which is a model of the  first-order theory of a direct sum of certain finitely presented (f.p.) modules, modules that we call pure-free for their analogy with free modules, where instead of just $_RR$ we allow arbitrary (left) f.p. modules in the construction. These, in turn, are chosen to be f.p.\ free realizations of the pp formulas in question, and so we arrive at a specific class of f.p.\ modules for each choice of descending chain of pp formulas.


These results grew out of model-theoretic investigations about categoricity, saturation and universality of free algebras of infinite rank in varieties in the sense of universal algebra, see \cite{arxiv} for the larger story. May it just be mentioned that while \texttt{universal} structures are the focus there, here Cor.\ \ref{BassThm}(v)--(vi) (plus extra clause) could be stated as saying that a ring is left perfect if and only if the free module of rank $\kappa$ is universal among the $\kappa$-generated flat modules with respect to pure  embeddings if and only if it is  universal  among the $\kappa$-generated flat models of its own theory with respect to elementary embeddings. Items (v) and (v)$_T$ of the theorem and the other corollaries are generalizations of this universality result to the corresponding pure-free modules or models.

%
%
%

The second author would like to thank Martin Ziegler for straightening out item (b) in the final remark of the paper.

\section{Preliminaries} All modules are unitary left $R$ modules over an associative ring $R$ with $1$. The class of all such is denoted by $\RMod$. The shorthand
f.g.\ means \emph{finitely generated}, f.p.\ means \emph{finitely presented}.  $\Rmod$ denotes the (skeletally small) class of all f.p.\ modules in $\RMod$. Given a class $\K\subseteq\RMod$, the classes of all direct summands of all direct sums (resp., all \emph{finite} direct sums) of modules from $\K$ is denoted by  $\Add(\K)$ (resp., $\add(\K)$). 
Note, the \texttt{projective modules} are precisely the modules in $\Add(_RR)$ while the f.g.\ projectives are the modules in  $\add(_RR)$. Correspondingly, the  \texttt{pure-projective modules} are precisely the modules in $\Add(\Rmod)$ while the f.g.\ pure-projectives are the modules in $\add(\Rmod)$. In other words, the projective modules are the direct summands of free modules, while the pure-projective modules are the direct summands of direct sums of f.p.\ modules, which is why, in analogy, we call direct sums of f.p.\ modules \texttt{pure-free}. 

It is well known that the pure-projective modules are the modules that are projective w.r.t.\ \texttt{pure-exact sequences}, where a monomorphism is said to be \texttt{pure}  if it preserves the pp type of all tuples (elements suffice!). 

Throughout, \texttt{tuples} are finite sequences of elements---for left modules usually thought of as column vectors. (The same applies to tuples of variables.) The pp type of a tuple $\bar a$ in a module $M\in\RMod$ is the set $\pp_M(\bar a)$ of all pp formulas realized by $\bar a$ in $M$. Such a pp type is said to be \texttt{finitely generated} or \texttt{f.g.}\/ if it contains a formula $\phi$ which implies, in \emph{every} module, every other formula in it, i.e., $\phi\leq\psi$ for every $\psi\in\pp_M(\bar a)$, where $\leq$ is the preordering of implication in the  lattice of pp formulas of  corresponding arity in $\RMod$, i.e., $\phi\leq\psi$ iff $\phi(M) \subseteq\psi(M)$ for \emph{every} $M\in\RMod$.
(Here we have to assume basic familiarity with pp formulas. Recall,  a typical $n$-ary pp formula is $A|B\bar x$ or, in its long form, $\exists\bar y A\bar y \dot= B\bar x$, where $A$ and $B$ are matching matrices over $R$, i.e., $A$ is $m\times l(\bar y)$ and $B$ is $m\times l(\bar x)=m\times n$. The reader  is referred to  \cite{P1} and  \cite{P2} for more detail.)

\texttt{Mittag-Leffler modules} are characterized as the modules in which every tuple has a finitely generated (f.g.) pp type, \cite[Main Thm.]{habil}, \cite{R}, \cite{R2} or \cite{P2}. While pure-projective modules are Mittag-Leffler, they enjoy a stronger property: if $M$ is pure-projective, every tuple $\bar a$ in $M$ satisfies a certain pp formula $\phi$ \texttt{freely}, which means that for every module $N\in\RMod$ and every tuple $\bar b$ that satisfies $\phi$  therein, there is a map $h: M\to N$ sending $\bar a$ to $\bar b$. We write $h: (M, \bar a) \to (N, \bar b)$ (thinking of this map as a morphism of pointed modules) and call $(M, \bar a)$ a \texttt{free realization} of $\phi$. It is easily seen that then $\phi$ generates $\pp_M(\bar a)$ (so $M$ is indeed Mittag-Leffler).

It is a classical result from  \cite[2.2.2]{RG} that countably generated Mittag-Leffler modules are pure-projective, see  \cite[Lemma 3.9]{habil} for a model-theoretic proof or \cite[Thm.\,1.3.26]{P2}.

For later reference we summarize.

\begin{fact} \label{ML}
\begin{enumerate}[\rm(1)]
 \item Every pure submodule of a Mittag-Leffler module is Mittag-Leffler.
 \item Every pure-free module is pure-projective (hence Mittag-Leffler).
 \item The pure-projective modules are precisely the direct summands of the pure-free modules (just as the projectives are the direct summands of the free modules).
 \item A module  is  pure-projective (resp., projective) if and only if it is in $\Add(\Rmod)$ (resp., in $\Add({_RR})$).
 \item Every pure-projective module is Mittag-Leffler.
 \item The converse holds for countably generated modules.
 \item Hence every countably generated pure submodule of a Mittag-Leffler module is pure-projective.
\end{enumerate}
\end{fact}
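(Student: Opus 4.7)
The plan is to verify each of the seven items in turn, mostly by unwinding the definitions collected in the preamble and invoking the explicit citations already supplied in the text, since this Fact is meant as a summary of known material.

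For (1), observe that purity is exactly the preservation of pp types: if $N\subseteq M$ is a pure embedding and $\bar a$ is a tuple in $N$, then $\pp_N(\bar a)=\pp_M(\bar a)$, so if the latter is finitely generated, so is the former, and $N$ is Mittag-Leffler. For (2), a pure-free module is by definition a direct sum $\bigoplus_i F_i$ of f.p.\ modules; each summand lies in $\Rmod\subseteq\Add(\Rmod)$, so the sum is in $\Add(\Rmod)$, hence pure-projective. Item (3) is then just the statement that $\Add(\Rmod)$ equals the class of direct summands of direct sums of f.p.\ modules (which is the content of \emph{pure-free}), and (4) is essentially the definition of pure-projective and projective as already recorded in the preamble.

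For (5), I would appeal to the free-realization discussion preceding the Fact: every tuple $\bar a$ in a pure-projective module $M$ satisfies some pp formula $\phi$ freely, and such a $\phi$ generates $\pp_M(\bar a)$, giving Mittag-Leffler. Alternatively one notes that every f.p.\ module is Mittag-Leffler (tuples have pp types generated by a single formula coming from the presentation), that direct sums of Mittag-Leffler modules are Mittag-Leffler (a fixed tuple involves only finitely many summands), and that direct summands are pure submodules, so Mittag-Leffler passes to them by (1).

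Item (6) is the substantive ingredient, the classical Raynaud--Gruson theorem \cite[2.2.2]{RG}; the paper already points to model-theoretic proofs \cite[Lemma 3.9]{habil} and \cite[Thm.\,1.3.26]{P2}, so I would simply cite it. Finally (7) falls out by combining (1) and (6): a countably generated pure submodule of a Mittag-Leffler module is Mittag-Leffler by (1) and countably generated by hypothesis, hence pure-projective by (6). The only step that is not essentially a definition chase is (6); everything else is bookkeeping around the notions of \emph{pure}, \emph{pure-free}, \emph{pure-projective}, and \emph{Mittag-Leffler} set up in the preceding pages.
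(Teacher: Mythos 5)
Your proposal is correct and matches the paper's intent exactly: the Fact is stated without a separate proof precisely because it is a summary of the definitions and citations assembled in the preceding paragraphs (purity as preservation of pp types, the free-realization argument for pure-projectives, and the Raynaud--Gruson theorem \cite[2.2.2]{RG} for item (6)). Your verification of items (1)--(5) and (7) by definition-chasing and your citation of the classical result for (6) is precisely the justification the paper relies on.
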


\begin{notation}\label{gen}
 Given a class of modules $\G$, denote by $\Gamma_{\G}$ the set of all pp formulas (of any arity) that generate a pp type in a module from $\G$.  
 
 Clearly, if $\G\subseteq\Add(\Rmod)$, then $\Gamma_{\G}$ is the set of all pp formulas that some pointed module from $\G$ freely realizes.
\end{notation}

Given any class $\K\subseteq\RMod$, the sets of pp types in $\add(\K)$ and in $\Add(\K)$ are the same---for two reasons. One, the pp type of a tuple in a direct summand $A$ of $B$ is the same in $A$ as in $B$, for the simple reason that direct summands are pure. Two, every tuple in an infinite direct sum of modules from $\K$ is contained already in a finite subsum, hence in a certain module from $\add(\K)$.

As customary in the model theory of modules, we consider modules as first-order structures in an elementary (= finitary first-order) language $L$ that has, besides a constant symbol $0$ and a binary operation symbol $+$, unary function symbols for every scalar from the ring (and therefore depends on the given ring). The $L$-theory, $\Th(\K)$ of a class of modules $\K$   is the set of all $L$-sentences that are true in all members of $\K$. Given a single module $M$, one writes $\Th(M)$ instead of $\Th(\{M\})$ and calls this object \texttt{the complete theory of $M$}. Two modules are said to be \texttt{elementarily equivalent} if they possess the same complete theory. An arbitrary (elementary) theory is called \texttt{complete} if all of its models are elementarily equivalent. 

The following is well known---the second part is an easy consequence of the first, which in turn follows from the well-known pp-elimination for modules. A \texttt{pp index} in a module $M$ is the the size of the factor group $\phi(M)/\psi(M)$ in case it is finite, and the symbol $\infty$ otherwise. Here $\phi$ and $\psi$ are unary pp formulas with 
$\psi\leq\phi$ (which means, remember, that $\psi(N)\subseteq\phi(N)$ for all $N\in\RMod$).

\begin{fact}
\begin{enumerate}[\rm(1)]
 \item Two modules  (over a given ring)  are elementarily equivalent if and only if they have the same pp indices. 
  \item The theory of all free modules of infinite rank is complete, i.e., all free modules of infinite rank (over a given ring) are elementarily equivalent.
\end{enumerate}
 \end{fact}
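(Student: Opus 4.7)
The plan is to deduce (1) from the Baur--Monk pp-elimination theorem and then obtain (2) by a direct computation of pp indices in $R^{(\kappa)}$.

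For (1), I would invoke pp-elimination for modules: every first-order $L$-sentence is equivalent, modulo the axioms of left $R$-modules, to a Boolean combination of \emph{invariant sentences} of the form ``$[\phi(M):\psi(M)] \geq n$'' where $\psi \leq \phi$ are pp formulas and $n \geq 1$. From this it follows at once that $\Th(M) = \Th(N)$ iff $M$ and $N$ satisfy the same invariant sentences, i.e., iff they have the same pp indices. The argument handles $n$-ary pp pairs in the same way; for the version stated in the excerpt one just records the unary case.

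For (2), the key ingredient is that pp formulas commute with direct sums: $\phi(\bigoplus_i M_i) = \bigoplus_i \phi(M_i)$ for every pp formula $\phi$. Applied to $R^{(\kappa)}$, this yields, for any pp pair $\psi \leq \phi$,
$$\phi(R^{(\kappa)})/\psi(R^{(\kappa)}) \;\cong\; \bigl(\phi(R)/\psi(R)\bigr)^{(\kappa)}.$$
If $\phi(R) = \psi(R)$ the quotient is trivial and the pp index is $1$; otherwise, because $\kappa$ is infinite, the quotient is an infinite abelian group and the pp index is $\infty$. Either way the value does not depend on the particular infinite cardinal $\kappa$. By (1), all free modules of infinite rank are then elementarily equivalent, so their common theory is complete.

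The main (and essentially only) obstacle is the appeal to pp-elimination, a classical but nontrivial structural result in the model theory of modules; I would cite it as a black box from \cite{P1} or \cite{P2}. Once that tool is in place, the deduction of (1) is immediate and (2) is a one-line computation using the compatibility of pp formulas with arbitrary direct sums.
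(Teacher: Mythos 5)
Your proposal is correct and follows exactly the route the paper indicates: it states the Fact without proof, remarking that (1) follows from pp-elimination (Baur--Monk) and that (2) is an easy consequence of (1). Your computation of the pp indices of $R^{(\kappa)}$ via the compatibility of pp formulas with direct sums is precisely the standard way to carry out that "easy consequence," so there is nothing to add.
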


Therefore, the theory of all free modules of infinite rank is the (complete) theory $\Th(\free)$ of the free module $\free= {_RR}^{(\omega)}$ of rank $\aleph_0$. We denote it by $\Tfree$.


\section{Generalized Bass modules}
 We generalize to arbitrary descending chains of pp formulas Bass' construction of a flat module that is not projective when the ring has an infinite descending chain of principal right ideals.
\begin{definition}\label{Bass} Suppose $\Phi$ is  a descending chain  of pp formulas of fixed arity, $\phi_0\geq\phi_1\geq\phi_2\geq\ldots$.
 
 A \texttt{Bass module $B_\Phi$} is the direct limit of a  direct system (in fact, a chain) obtained as follows.
Choose finitely presented free realizations $(A_i, \br a_i)$ of $\phi_i$ and maps $g_i : (A_i, \br a_i) \to (A_{i+1}, \br a_{i+1})$ for all $i$, which exist because $ \br a_{i+1}$ satisfies $\phi_i$ in $A_{i+1}$. (This choice is by no means unique.)

Consider also the corresponding maps  $f_i : A_{i}\to B_\Phi$ and the module $F_\Phi := \bigoplus_i A_i^{(\omega)}$,  the direct sum of infinitely many copies of each of the $A_i$, $i<\omega$.

  \end{definition}

\begin{lem}\label{Bassversion}
If the chain $\Phi$  does not stabilize  (uniformly in $\RMod$), the module $B_\Phi$ is not Mittag-Leffler, hence not pure-projective either.
\end{lem}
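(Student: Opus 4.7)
\emph{Plan.} The strategy is to pinpoint one tuple in $B_\Phi$ whose pp type cannot be finitely generated. The natural candidate is the common image $\br a \in B_\Phi$ of all $\br a_i$ under the limit maps $f_i$: these images agree in $B_\Phi$ because, by construction, each $g_i$ sends $\br a_i$ to $\br a_{i+1}$. Since morphisms preserve pp formulas, $\br a$ satisfies every $\phi_i$ in $B_\Phi$, so each $\phi_i$ lies in $\pp_{B_\Phi}(\br a)$.

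I would then argue by contradiction: suppose some pp formula $\phi$ generates $\pp_{B_\Phi}(\br a)$. Since every $\phi_i$ is in the type, one direction comes for free: $\phi\le\phi_i$ in $\RMod$ for all $i$. For the reverse inequality at some stage, I would invoke the standard fact that pp formulas are reflected through direct limits of modules---satisfaction of a pp formula at a direct limit is always already witnessed by a preimage at some finite stage. Applied to our $\br a$ satisfying $\phi$ in $B_\Phi$, this produces an index $j$ at which $\br a_j$ already satisfies $\phi$ in $A_j$. Because $(A_j,\br a_j)$ is a free realization of $\phi_j$, the formula $\phi_j$ generates $\pp_{A_j}(\br a_j)$, and thus $\phi_j\le\phi$ in $\RMod$.

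Combining $\phi_j\le\phi\le\phi_i$ for every $i\geq j$ with the descending chain condition $\phi_j\ge\phi_i$ yields $\phi_j = \phi_i$ in $\RMod$ for all $i\geq j$, contradicting the hypothesis that $\Phi$ does not stabilize uniformly in $\RMod$. Hence $\pp_{B_\Phi}(\br a)$ is not finitely generated, so $B_\Phi$ is not Mittag-Leffler; by Fact~\ref{ML}(5) it is therefore not pure-projective either.

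The only substantive ingredient beyond bookkeeping in the lattice of pp formulas is the reflection of pp formulas through direct limits of modules; this is a classical property in the model theory of modules (pp formulas are preserved by direct limits, and for a tuple coming from a specific $A_j$, satisfaction in the colimit backs up to some later stage in the chain). Once this is granted, the proof is essentially the short chase of inequalities sketched above.
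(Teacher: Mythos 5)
Your proof is correct and is essentially the paper's argument: the paper simply outsources the core step to \cite[Lemma 3.6]{PR}, which states precisely what you prove directly, namely that the pp type of $f_i(\br a_i)$ in $B_\Phi$ is not finitely generated when $\Phi$ does not stabilize. Your chase of inequalities (reflection of pp satisfaction to a finite stage of the chain, then $\phi_k\le\phi\le\phi_i$ forcing stabilization) is the standard content of that cited lemma, so there is nothing to add beyond noting that the stage at which $\phi$ reflects back may be later than the stage where the tuple first appears, which your argument already accommodates.
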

\begin{proof}
If $\Phi$  does not stabilize, by \cite[Lemma 3.6]{PR}, the pp type of $f_i(\br a_i)$ in $B_\Phi$  is not finitely generated, hence $B_\Phi$ is not Mittag-Leffler.
\end{proof}

This suggests the significance of descending chain conditions (dcc) of the following kind.

\begin{definition}
Let $\G, \K \subseteq\RMod$ be classes of modules. 

$\K$ is said to have the  \texttt{$\Gamma_\G$-dcc} if for every $K\in\K$ and every descending chain $\gamma_0(\bar x)\geq \gamma_1(\bar x)\geq\gamma_2(\bar x)\geq\ldots$  of formulas  from $\Gamma_\G$ (in the sense of Notation \ref{gen} above)  of the same arity, the corresponding descending chain of subgroups of $K^{\l(\bar x)}$ defined by the $\gamma_i$ in $K$ stabilizes.
\end{definition}

\begin{lem} Let $\G\subseteq\Add(\Rmod)$.

Then $\RMod$ has the $\Gamma_\G$-dcc if and only if $\Add(\G)$ does.
\end{lem}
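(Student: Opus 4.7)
The plan is to handle the two directions separately. One direction is immediate: since $\Add(\G) \subseteq \RMod$, if every chain in $\Gamma_\G$ stabilizes in every $M \in \RMod$, then in particular it stabilizes in every $K \in \Add(\G)$. So I focus on the converse.

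Assume $\Add(\G)$ has the $\Gamma_\G$-dcc and fix a descending chain $\gamma_0 \geq \gamma_1 \geq \gamma_2 \geq \ldots$ in $\Gamma_\G$. The idea is to upgrade the assumed stabilization in a single member of $\Add(\G)$ to stabilization in the pp lattice itself --- a strictly stronger conclusion than per-module stabilization throughout $\RMod$, but one that falls out of a well-chosen test module. Since $\G \subseteq \Add(\Rmod)$, the remark following Notation \ref{gen} provides, for each $i$, a free realization $(G_i, \bar a_i)$ of $\gamma_i$ with $G_i \in \G$. I would then bundle these into $K := \bigoplus_{i<\omega} G_i$, which lies in $\Add(\G)$, and apply the hypothesized dcc to $K$ to produce an index $N$ past which $\gamma_j(K) = \gamma_N(K)$.

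The closing move pushes stabilization in $K$ back down onto each $G_j$. For $j \geq N$, the tuple $\bar a_j$ lies in $\gamma_j(K) = \gamma_{j+1}(K)$; since $G_j$ is a direct summand and hence pure in $K$, the formula $\gamma_{j+1}$ is already satisfied by $\bar a_j$ inside $G_j$. The defining property of the free realization $(G_j, \bar a_j)$ --- namely that $\gamma_j$ generates $\pp_{G_j}(\bar a_j)$ --- then forces $\gamma_j \leq \gamma_{j+1}$, and together with the descent of the chain this yields equivalence in the pp lattice. Iterating for every $j \geq N$, the chain is eventually constant in the pp lattice, so $\gamma_i(M)$ stabilizes in every $M \in \RMod$, as required.

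The only point that needs a moment's care is the bookkeeping around free realizations: I need $\Gamma_\G$ to consist of pp formulas freely realized in $\G$ itself (not merely in $\Add(\G)$), which is precisely what the hypothesis $\G \subseteq \Add(\Rmod)$ secures, since pure-projectivity of the modules in $\G$ turns ``generates the pp type'' into ``is freely realized''. Once that identification is in hand, the argument is short and purely formal; beyond it I do not foresee any obstacles.
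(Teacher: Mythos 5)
Your proof is correct, and it takes a genuinely different (if closely related) route from the paper's, even though both arguments end up testing the chain on the same module, namely the direct sum $K=\bigoplus_{i<\omega}G_i$ of free realizations of the $\gamma_i$. The paper argues contrapositively: it starts from an arbitrary $M\in\RMod$ in which the chain fails to stabilize, picks witnesses $\bar a_i\in\gamma_i(M)\setminus\gamma_{i+1}(M)$, and uses the \emph{mapping-out} property of free realizations (a morphism $(G_i,\bar g_i)\to(M,\bar a_i)$ exists, and homomorphisms preserve pp formulas) to transplant the failure of stabilization into $K\in\Add(\G)$. You instead argue directly, invoking the other face of free realizations---that $\gamma_j$ \emph{generates} $\pp_{G_j}(\bar a_j)$---together with purity of the direct summand $G_j$ in $K$, to convert stabilization of the subgroup chain in the single test module $K$ into the eventual equivalence $\gamma_j\eq\gamma_{j+1}$ in the lattice of pp formulas. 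Your intermediate conclusion is strictly stronger than what the lemma asserts (uniform stabilization in the lattice rather than per-module stabilization), and it is precisely the notion of ``stabilizing uniformly in $\RMod$'' that Lemma \ref{Bassversion} trades in, so your route has the added benefit of making that equivalence explicit; the paper's version avoids any mention of the lattice but needs an external module $M$ to get started. Both proofs rest on the same two facts secured by $\G\subseteq\Add(\Rmod)$: formulas in $\Gamma_\G$ admit free realizations in $\G$ itself, and direct summands are pure --- your bookkeeping at the step $\bar a_j\in\gamma_{j+1}(K)\Rightarrow\bar a_j\in\gamma_{j+1}(G_j)$ uses the latter correctly.
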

\begin{proof}
 For the nontrivial direction, consider a  descending chain $\gamma_0(\bar x)\geq \gamma_1(\bar x)\geq\gamma_2(\bar x)\geq\ldots$  from $\Gamma_\G$ and assume it does not stabilize in some module $M$. We have to produce a module from $\Add(\G)$ in which it does not stabilize either. To this end, first choose tuples $\bar a_i\in \gamma_i(M)\setminus\gamma_{i+1}(M)$, then free realizations $(G_i, \bar g_i)$ of $\gamma_i$ in $\Gamma_\G$, and finally maps $(G_i, \bar g_i)\to (M, \bar a_i)$, for all $i$.
 
 As pp formulas  are preserved by homomorphisms, also $\bar g_i\in \gamma_i(G_i)\setminus\gamma_{i+1}(G_i)$, for all $i$, hence this chain does not stabilize in $\bigoplus_{i<\omega} G_i\in \Add(\G)$.
\end{proof}

\section{The  setting}
We start from an arbitrary \emph{set}\footnote{This is no restriction, since $\Rmod$ is skeletally small.} of finitely presented modules, $\A\subseteq\Rmod$, 
close it under finite direct sums and direct summands, 
$\B=\add\A$, and let $\C=\lim\B$, the class of all direct limits (colimits) of modules from $\B$. Whenever we write one of theses letters, $\A$, $\B$, or $\C$, we tacitly associate the other two as just indicated. With any such choice we associate the following objects and concepts. 
 
\begin{definition}
 
\begin{enumerate}[\rm(a)]
\item  $F_\A$ is the pure-free module $\bigoplus_{A\in\A} A^{(\omega)}$.
\item $T_\A$ is the (complete) $L$-theory of $F_\A$.
\item A \texttt{$\C$-model}  is a model of $T_\A$ that is at the same time a member of\/ $\C$.
\end{enumerate}
\end{definition}


\begin{lem} \label{lem}
 Let $\kappa$ be an infinite cardinal and $\K\subseteq \C$ be a class of $\kappa$-generated modules in $\C$.  
\begin{enumerate}[\rm(1)]
   \item A module $K\in \K$ is pure-projective if and only if it is (isomorphic to) a direct summand of $F_\A^{(\kappa)}$.

 \item Any pure-projective model of\/ $T_\A$ in $\K$ is elementarily embedded in $F_\A^{(\kappa)}$ (in fact, as a direct summand).
\end{enumerate}
\end{lem}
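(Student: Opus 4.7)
The ``if'' direction of (1) is immediate: $F_\A^{(\kappa)}=\bigoplus_{A\in\A}A^{(\kappa)}$ is pure-free, hence pure-projective by Fact~\ref{ML}(2), and a direct summand of a pure-projective module is pure-projective.

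For the ``only if'' direction, fix a pure-projective $K\in\K$ and, using $K\in\C$, write $K=\varinjlim_{i\in I}B_i$ with $B_i\in\B$. The key first point is that the canonical surjection $\pi\colon\bigoplus_{i\in I}B_i\twoheadrightarrow K$ is a pure epimorphism. Indeed, pp formulas commute with directed colimits in $\RMod$, so any $\bar n\in\phi(K)$ is the image of some $\bar b\in\phi(B_j)$ at a stage $j\in I$; viewed inside $\bigoplus_iB_i$ via the split (hence pure) embedding of $B_j$, this $\bar b$ still lies in $\phi$, and $\pi(\bar b)=\bar n$. Pure-projectivity of $K$ makes $\pi$ split, so $K$ is a direct summand of $\bigoplus_iB_i\in\Add(\B)=\Add(\A)$.

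To pin $K$ inside $F_\A^{(\kappa)}$ specifically, I would invoke Kaplansky's theorem for modules in $\Add$ to decompose $K=\bigoplus_{\alpha<\mu}K_\alpha$ with each $K_\alpha$ countably generated and pure-projective; the $\kappa$-generation of $K$ forces $\mu\le\kappa$. Each $K_\alpha$ is then a countably generated direct summand of $\bigoplus_iB_i$, and standard Kaplansky-type arguments place it as a direct summand of a \emph{countable} sub-sum, hence (since each $B_i$ is a summand of a finite sum of modules from $\A$) as a direct summand of $F_\A$. Reassembling gives $K$ as a direct summand of $F_\A^{(\mu)}$, itself a direct summand of $F_\A^{(\kappa)}$. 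This cardinality bookkeeping is, I expect, the most delicate step of the argument.

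For (2), by (1) the pure-projective model $K\models T_\A$ in $\K$ is a direct summand, hence purely embedded, in $F_\A^{(\kappa)}$. Since pp formulas commute with direct sums, $\phi(F_\A^{(\kappa)})/\psi(F_\A^{(\kappa)})\cong\bigoplus_{A\in\A}(\phi(A)/\psi(A))^{(\kappa)}$ has the same index ($1$ or $\infty$) as $\phi(F_\A)/\psi(F_\A)$ for every pp pair $\phi/\psi$; the pp-indices criterion for elementary equivalence stated in the preliminaries then gives $F_\A^{(\kappa)}\equiv F_\A$, so $F_\A^{(\kappa)}\models T_\A$. Both $K$ and $F_\A^{(\kappa)}$ are therefore models of the complete theory $T_\A$, and since pure embeddings preserve \emph{and} reflect pp formulas and (by Baur--Monk pp-elimination) every $L$-formula is modulo $T_\A$ a Boolean combination of pp formulas, the pure inclusion $K\hookrightarrow F_\A^{(\kappa)}$ is automatically elementary, completing (2).
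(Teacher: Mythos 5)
Your argument is correct and follows essentially the same route as the paper: part (1) rests on the fact (Lenzing's, which the paper cites and you reprove inline) that every module in $\C$ is a pure-epimorphic image of a direct sum of modules from $\B$, which splits by pure-projectivity, and part (2) is Sabbagh's theorem that a pure embedding between elementarily equivalent modules is elementary, which you re-derive via pp-elimination. The only divergence is your Kaplansky decomposition in the cardinality step, which works but is avoidable: since $K$ is $\kappa$-generated it already lies in a sub-sum of at most $\kappa$ of the $B_i$, and restricting the splitting projection to that sub-sum exhibits $K$ as a direct summand of it, hence of $F_\A^{(\kappa)}$.
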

\begin{proof} (1) Pure-projective modules are precisely the direct summands of pure-free modules. To see that  for that pure-free one can take $F_\A^{(\kappa)}$ when 
$K\in\K\subseteq \C$, we invoke an old result of Lenzing \cite{Len}, see \cite[33.9(2)]{W} or \cite[Lemma 2.13]{GT}: any $C\in\C$ is the image of a pure-epimorphism  from a direct sum of modules from $\B$, which must split in case $C$ is pure-projective, whence in that case $C$ is a direct summand of a direct sum of modules from $\B$, hence also of  $F_\A^{(\kappa)}$, for every $B\in \B$ occurs as a direct summand of  $F_\A$ and one needs at most $\kappa$ summands for a $\kappa$-generated submodule.

To get from (2) to (1), all one has to realize is that, by a classical result of Sabbagh \cite{Sab}, the elementary embeddings of modules are exactly the pure embeddings between elementarily equivalent modules, cf.\ \cite[Prop.2.25]{P1} (and that $T_\A$ is complete, i.e., all of its models are elementarily equivalent).
\end{proof}

\section{Main results}
First we make Bass \emph{models} out of  Bass modules. We work in the above setting. 

\begin{lem}\label{flatmodel} Suppose $\Phi$ is a descending chain in $\Gamma_\B$ and $B_\Phi$ is the corresponding Bass module (Def.\ \ref{Bass}).
 \begin{enumerate}[\rm(1)]
 \item $F_\A\oplus B_\Phi$ is a model of $T_\A$.
\item If the chain $\Phi$  does not stabilize, $F_\A\oplus B_\Phi$  is a $\C$-model that is not Mittag-Leffler, hence not pure-projective either.
 \end{enumerate}
\end{lem}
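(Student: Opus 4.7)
The plan is to deduce (1) from the pp-index characterization of elementary equivalence recalled just before the lemma, and to reduce (2) to Lemma \ref{Bassversion} via the fact that a direct summand is a pure submodule. Since $T_\A = \Th(F_\A)$ is complete, it suffices for (1) to show $F_\A \oplus B_\Phi \equiv F_\A$, i.e.\ that these two modules have the same pp indices.

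Because pp formulas commute with direct sums, for every pp pair $\psi\leq\phi$ one has
\[ \phi(F_\A\oplus B_\Phi)/\psi(F_\A\oplus B_\Phi) \,\cong\, \phi(F_\A)/\psi(F_\A)\,\oplus\,\phi(B_\Phi)/\psi(B_\Phi). \]
If $\phi(F_\A)/\psi(F_\A)$ is infinite, both sides are infinite and the pp indices match as $\infty$. The remaining case is where $\phi(F_\A)/\psi(F_\A)$ is finite; I plan to show that this forces $\phi(B_\Phi)=\psi(B_\Phi)$, so that the two finite indices agree on the nose.

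The key point is that $F_\A = \bigoplus_{A\in\A} A^{(\omega)}$ contains infinitely many copies of each $A\in\A$; together with the commutation of pp with direct sums, finiteness of $\phi(F_\A)/\psi(F_\A)$ therefore forces $\phi(A)=\psi(A)$ for every $A\in\A$. Because pp subgroups commute with direct sums and are inherited by direct summands, the same vanishing holds for every $B\in\B = \add\A$, and in particular for each $A_i$ appearing in the chain defining $B_\Phi$. Since pp subgroups also commute with direct limits, $\phi(B_\Phi) = \psi(B_\Phi)$, as required.

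For (2), membership in $\C$ is immediate: $B_\Phi$ is a direct limit of the $A_i\in\B$ by construction, $F_\A$ is the directed union of its finite partial sums (which lie in $\add\A = \B$), and $\C$ is closed under direct sums because $\B$ is closed under finite ones. Combined with (1), $F_\A\oplus B_\Phi$ is thus a $\C$-model. For the failure of Mittag-Leffler: by Lemma \ref{Bassversion}, $B_\Phi$ is not Mittag-Leffler when $\Phi$ does not stabilize; but $B_\Phi$ sits as a direct summand, hence as a pure submodule, of $F_\A\oplus B_\Phi$, so by Fact \ref{ML}(1) the latter cannot be Mittag-Leffler either. Failure of pure-projectivity then follows from Fact \ref{ML}(5). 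The only substantive step is the passage in (1) from finite index in $F_\A$ down the chain $A_0\to A_1\to\cdots$ and up to the limit $B_\Phi$; once the commutation of pp subgroups with direct sums, summands, and direct limits is invoked, this is essentially bookkeeping, and the rest of the argument is formal.
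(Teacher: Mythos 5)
Your proposal is correct and follows essentially the same route as the paper: both arguments reduce (1) to comparing pp indices, use the $\omega$-fold multiplicities in $F_\A$ to see that only the open/closed dichotomy matters, and then transfer closedness of a pp pair from the modules of $\A$ through $\add\A$ and the direct limit to $B_\Phi$ (you state this contrapositively to the paper's ``opens up in $B_\Phi$ $\Rightarrow$ opens up in some $A\in\A$'', but it is the same argument), while (2) is in both cases the combination of membership in $\C$, Lemma \ref{Bassversion}, and Fact \ref{ML}(1).
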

\begin{proof} Obviously, $B_\Phi$ and $F_\A$ are in $\C$, hence so is their direct sum. Therefore (2) follows from (1) and Lemma \ref{Bassversion}. 

To prove (1) we verify that $F_\A$ and $M:= F_\A\oplus B_\Phi$  are elementarily equivalent. Both of these modules clearly have all pp indices infinite. So it suffices to prove that a pp pair $\phi/\psi$ opens up in one of them iff it does so in the other.
%
%
For the nontrivial implication of these, suppose it opens up in $M$ because it opens up in $B_\Phi$. It suffices to prove that it also opens up in $F_\A$.

Recall that $B_\Phi$ is a direct limit of modules from $\B$. By properties of direct limits, a pp pair that opens up in $B_\Phi$ must  open up in some $B\in\B$. But $B$ is a direct summand of some finite direct sum of modules from $\A$. So the  pair in question must open up in  a finite direct sum of modules---hence also    in some individual module---from $A\in \A$, thus also in $F_\A$, as desired.
%
%
\end{proof}

\begin{rem}\label{rem2}
 If the Bass module $B_\Phi$ \emph{is} pure-projective,  by Eilenberg's trick, $M:= F_\A\oplus B_\Phi\cong F_\A$, and so, trivially,  $M$ is a model of $T_\A$. The point of the above argument is that it is a model---whether $B_\Phi$ is pure-projective or not.
\end{rem}

%
\begin{thm}\label{M} 
  The  following are equivalent for any ring $R$ and any given choice of $\A\subseteq \Rmod$, $\B=\add\A$ and $\C=\lim\B$.

\begin{enumerate}[\rm (i)]
\item\label{i} $\RMod$ has the $\Gamma_\B$-dcc.
\item\label{ii} All modules in $\C$ are Mittag-Leffler.
 \setcounter{enumi}{1} 
 \item{\hspace{-0.5em}}$_T$ All $\C$-models of $T_\A$  are Mittag-Leffler.
\item\label{iii}  All countably generated modules in   $\C$ are pure-projective.
 \setcounter{enumi}{2} 
 \item{\hspace{-0.5em}}$_T$ All countably generated $\C$-models of $T_\A$  are  pure-projective.
  \item\label{iv}  Every countably generated module in   $\C$ is isomorphic to a direct summand of $F_\A$.
 \setcounter{enumi}{3} 
 \item{\hspace{-0.5em}}$_T$ Every countably generated $\C$-model of $T_\A$  is isomorphic to a direct summand of $F_\A$ that is an elementary substructure of $F_\A$.

 \item\label{v}  Every countably generated module in   $\C$ is purely embedded in $F_\A$.
 \setcounter{enumi}{4} 
 \item{\hspace{-0.5em}}$_T$ Every countably generated $\C$-model of $T_\A$   is elementarily embedded in $F_\A$.
%
\end{enumerate}
\end{thm}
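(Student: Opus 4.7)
The plan is to establish two parallel cycles, link them, and close the loop. Specifically, I would prove
\[
\text{(i)}\Rightarrow\text{(ii)}\Rightarrow\text{(iii)}\Rightarrow\text{(iv)}\Rightarrow\text{(v)}\Rightarrow\text{(iii)}
\]
and the analogous cycle $\text{(ii)}_T\Rightarrow\text{(iii)}_T\Rightarrow\text{(iv)}_T\Rightarrow\text{(v)}_T\Rightarrow\text{(ii)}_T$, link the two by the trivial restrictions (each ungraded condition implying its $_T$-variant), and then close everything by $\text{(v)}_T\Rightarrow$(i).

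The key step is (i)$\Rightarrow$(ii). For $M\in\C$ and a tuple $\bar a\in M$, write $M=\lim_{j\in I}B_j$ with $B_j\in\B$ and let $\bar a_j\in B_j$ be the compatible image of $\bar a$ for $j\ge j_0$. Each $B_j\in\B\subseteq\add\A$ is pure-projective, so $\pp_{B_j}(\bar a_j)$ is freely generated by some $\gamma_j\in\Gamma_\B$, and homomorphism-preservation of pp formulas forces $\gamma_k\le\gamma_j$ whenever $j\le k$, making $\{\gamma_j\}_{j\ge j_0}$ downward directed in the lattice of pp formulas. First I would observe that the $\Gamma_\B$-dcc for $\RMod$ is equivalent to the absence of infinite strictly descending chains in that lattice on $\Gamma_\B$ (an offending chain in the lattice would fail to stabilize in the direct sum of its individual witnessing modules). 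Hence $\{\gamma_j\}$ admits a lattice-minimum $\gamma\in\Gamma_\B$, and this $\gamma$ then generates $\pp_M(\bar a)$: any $\phi\in\pp_M(\bar a)$ is realized in some $B_k$, whence $\gamma\le\gamma_k\le\phi$.

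The remaining implications of the first cycle are formal: (ii)$\Rightarrow$(iii) is Fact \ref{ML}(6); (iii)$\Rightarrow$(iv) combines Lemma \ref{lem}(1) with the isomorphism $F_\A^{(\aleph_0)}\cong F_\A$; (iv)$\Rightarrow$(v) is that summands are pure; (v)$\Rightarrow$(iii) uses that $F_\A$ is pure-projective, hence Mittag-Leffler by Fact \ref{ML}(5), so a pure submodule of it is Mittag-Leffler by Fact \ref{ML}(1), and then countably generated Mittag-Leffler modules are pure-projective by Fact \ref{ML}(6). The $_T$-cycle goes through identically, with Lemma \ref{lem}(2) supplying the ``elementary substructure'' clause of (iv)$_T$ via Sabbagh's criterion that pure embeddings between elementarily equivalent modules are elementary.

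It remains to show $\text{(v)}_T\Rightarrow$(i), which I expect to be the main obstacle. Assume (i) fails and fix a non-stabilizing chain $\Phi$ in $\Gamma_\B$. By Lemma \ref{flatmodel}, $M:=F_\A\oplus B_\Phi$ is a $\C$-model of $T_\A$ whose tuple on the Bass side has non-f.g.\ pp type (Lemma \ref{Bassversion}). Extract, by downward L\"owenheim--Skolem, a countably generated elementary submodel $N\prec M$ containing this bad tuple. Being pure in $M$ (Sabbagh), $N$ still lies in $\C$---here one invokes that $\C=\lim\B$ is closed under pure submodules, a standard consequence of its being a definable subcategory of $\RMod$ when $\B=\add\A$. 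Thus $N$ is a countably generated $\C$-model of $T_\A$ that is not Mittag-Leffler; if $\text{(v)}_T$ held, $N$ would elementarily (hence purely) embed into the Mittag-Leffler module $F_\A$, forcing $N$ itself to be Mittag-Leffler---a contradiction. The non-trivial input in this closing step is precisely the closure of $\lim\B$ under pure submodules, which is what allows us to replace the potentially large $F_\A\oplus B_\Phi$ by a countably generated counterexample.
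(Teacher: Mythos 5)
Your overall architecture coincides with the paper's: the unsubscripted cycle is closed exactly as in the text ((ii)$\Rightarrow$(iii) by Raynaud--Gruson via Fact \ref{ML}, (iii)$\Rightarrow$(iv) and the $_T$-clauses via Lemma \ref{lem} and Sabbagh, (v)$\Rightarrow$(iii) by pulling the Mittag-Leffler property back along a pure embedding into $F_\A$), and your in-line argument for (i)$\Rightarrow$(ii) --- pushing the tuple back into the directed system, observing that the generators $\gamma_j\in\Gamma_\B$ of the pp types of its preimages form a downward directed family which the dcc forces to have a minimum generating $\pp_M(\bar a)$ --- is precisely the content of the result the paper cites here (\cite[Lemma 3.6]{PR}), so that part is a correct self-contained reproof rather than a different route.

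The one place you genuinely depart from the paper is the closing implication, and that is where the gap sits. The paper closes at (iii)$_T$$\Rightarrow$(i): if $\Phi$ does not stabilize, Lemma \ref{flatmodel}(2) already exhibits $F_\A\oplus B_\Phi$ as a $\C$-model of $T_\A$ that is not Mittag-Leffler, hence not pure-projective; no L\"owenheim--Skolem reduction appears. Your detour through a ``countably generated elementary submodel $N\prec M$'' does not work as stated: downward L\"owenheim--Skolem controls the \emph{cardinality} of $N$ (it yields $|N|\le |R|+\aleph_0$), not the number of module generators, and over an uncountable ring an elementary submodel of that size need not be countably generated. (You are right to sense that countable generation is the delicate point --- $F_\A\oplus B_\Phi$ is itself countably generated only when $\A$ is countable, which the paper passes over --- but L\"owenheim--Skolem is not the tool that repairs it.) A second, separable error: your justification that $\C=\lim\B$ is closed under pure submodules because it is ``a definable subcategory'' is false in general --- for $\A=\{{_RR}\}$ the class $\C$ of flat modules is definable only over a left coherent ring. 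The closure under pure submodules you need is nevertheless true, but for a different reason: by Lenzing's criterion (already invoked in the proof of Lemma \ref{lem}), a map from a f.p.\ module into a pure submodule $N$ of $C\in\C$ factors through some $B\in\B$ inside $C$, and purity of $N$ together with the fact that the relevant tuple of $B$ freely realizes the generator of its pp type lets one redirect that factorization into $N$. The cleanest fix is to drop the reduction entirely and, as the paper does, play $F_\A\oplus B_\Phi$ directly against (iii)$_T$.
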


\begin{proof}
(\ref{i})$\implies$(\ref{ii}): every $C\in\C$ is a direct limit of some of the $B_i$ in $\B$. To show $C$ is Mittag-Leffler, we verify that all pp types of tuples in $C$ are f.g.\ in the sense of \cite[Thm.\,2.2]{R} or \cite[Thm.\,1.3.22]{P2}. As $\B\subseteq \Rmod$, we may invoke \cite[Lemma 3.6]{PR}, which implies that all $C\in\C$ are Mittag-Leffler provided $\RMod$ has the $\Gamma_\B$-dcc. (To be precise,  in \cite{PR} this is stated for countable limits, but it is obvious that it applies to arbitrary limits, which was made explicit in \cite[Lemma 3.8]{R2}.)

(\ref{ii})$\implies$(\ref{iii}) by the aforementioned  classical result  of Raynaud and Gruson. 

(\ref{iii})$\implies$(\ref{iv}) and (\ref{v})$\implies$(\ref{iii}) follow from Lemma \ref{lem} above for $\kappa=\aleph_0$, while
(\ref{iv})$\implies$(\ref{v}) and (\ref{iii})$\implies$(\ref{iii})$_T$ are trivial.



Finally, (\ref{iii})$_T$$\implies$(\ref{i}) is (the contrapositive of) Lemma \ref{Bassversion},  which concludes the proof of equivalence of (i) through (v) and (\ref{iii})$_T$.


By (the proof of) Lemma \ref{lem}(4), `elementary embeddings' in (\ref{iv})$_T$ and (\ref{v})$_T$ can be replaced by `pure embeddings'. 
Thus the string of implications 
(\ref{ii})$_T$$\implies\dots \implies$(\ref{v})$_T\implies$(\ref{iii})$_T$ follows in the same fashion as their  unsubscripted counterpart.

The missing link (ii)$\implies$(ii)$_T$ being trivial, this completes the proof. 
\end{proof}

%
%

First we apply the theorem to the largest possible class $\A$, that is $\A=\Rmod$. Then $\B=\A$ and $\C=\RMod$. Correspondingly, we obtain  the largest countably generated pure-free module $\pfree =\bigoplus_{A\in\Rmod} A^{(\omega)}$ and its elementary theory $\Tpfree$.


The equivalences (i)$\Leftrightarrow$(ii)$\Leftrightarrow$(iii) below were discovered by Daniel Simson, \cite{Sim}. 
 The equivalence (i)$\Leftrightarrow$(vi) goes back to a result of Prest (predating his books), \cite[Thm.\,4.5.1]{P2}.

\begin{cor}\label{Sim}
The following are equivalent for any ring $R$.


\begin{enumerate}[\rm (i)]
\item\label{i} $\RMod$ has the dcc on all pp formulas (equivalently, the dcc on all unary pp formulas).
\item\label{ii} All modules are Mittag-Leffler.
 \setcounter{enumi}{1} 
 \item{\hspace{-0.5em}}$_T$ All models of $\Tpfree$  are Mittag-Leffler.
\item\label{iii}  All countably generated modules  are pure-projective.
 \setcounter{enumi}{2} 
 \item{\hspace{-0.5em}}$_T$ All countably generated models of $\Tpfree$  are  pure-projective.
  \item\label{iv}  Every countably generated module is isomorphic to a direct summand  of $\pfree$.
 \setcounter{enumi}{3} 
 \item{\hspace{-0.5em}}$_T$ Every countably generated model of $\Tpfree$  is isomorphic to a direct summand  of $\pfree$ that is at the same time an elementary substructure.

 \item\label{v}  Every countably generated module is purely embedded in  $\pfree$.
 \setcounter{enumi}{4} 
 \item{\hspace{-0.5em}}$_T$ Every countably generated model of $\Tpfree$   is elementarily embedded in $\pfree$.
 \item\label{vi} $R$ is left pure-semisimple. 
\end{enumerate}
Given an infinite cardinal $\kappa$, everywhere above `countably generated' can be replaced by `$\kappa$-generated' if at the same time $\pfree$ is replaced by $\pfree^{(\kappa)}$.

%
 \end{cor}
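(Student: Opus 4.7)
The bulk of the corollary is the specialization of Theorem \ref{M} to $\A := \Rmod$. Under this choice, $\B = \add(\Rmod) = \Rmod$ (direct summands of finitely presented modules being finitely presented), and $\C = \lim \Rmod = \RMod$ (every module is a direct limit of its finitely presented approximations). Consequently $F_\A = \pfree$, $T_\A = \Tpfree$, and items (\ref{ii})--(\ref{v}) together with their $T$-subscripted variants transfer from Theorem \ref{M} verbatim. For (\ref{i}), every pp formula admits a finitely presented free realization, so $\Gamma_\B$ exhausts all pp formulas and the $\Gamma_\B$-dcc becomes the ordinary dcc on pp formulas; the reduction to the unary case is the standard Morita-type encoding of $n$-ary pp lattices over $R$ as unary pp lattices over $M_n(R)$.

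Condition (\ref{vi}) is absorbed easily: the implication (\ref{vi}) $\Rightarrow$ (\ref{iii}) is immediate from the definition of pure-semisimplicity, while the converse (\ref{i}) $\Rightarrow$ (\ref{vi}) is Prest's theorem, cited in the remark just before the corollary. For the $\kappa$-generated version, Lemma \ref{lem} is already stated for arbitrary infinite $\kappa$, so with $\pfree^{(\kappa)}$ in place of $\pfree$ the implications (\ref{iii})$_\kappa$ $\Rightarrow$ (\ref{iv})$_\kappa$ $\Rightarrow$ (\ref{v})$_\kappa$ proceed exactly as in the countable case. The converse (\ref{v})$_\kappa$ $\Rightarrow$ (\ref{i}) still goes through Lemma \ref{Bassversion}: the Bass module is countably generated, hence $\kappa$-generated, and a pure embedding of a non-stabilizing Bass module into the Mittag-Leffler module $\pfree^{(\kappa)}$ would force it to be Mittag-Leffler, a contradiction. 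To close the cycle, invoke (\ref{i}) $\Rightarrow$ (\ref{vi}) once more and note that pure-semisimplicity makes every module, in particular every $\kappa$-generated one, pure-projective, yielding (\ref{iii})$_\kappa$.

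The main obstacle lies in the $\kappa$-generated step (\ref{ii}) $\Rightarrow$ (\ref{iii})$_\kappa$: the classical Raynaud--Gruson theorem (Fact \ref{ML}(6)) establishes pure-projectivity only for \emph{countably} generated Mittag-Leffler modules, and no direct extension to uncountable $\kappa$ is available. The detour through pure-semisimplicity, supplied by Prest's theorem, is exactly what bridges the finitary Bass-module argument to a statement at arbitrary cardinality.
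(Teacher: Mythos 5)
Your proposal is correct and follows essentially the same route as the paper: the unsubscripted and $T$-subscripted items are read off from Theorem \ref{M} with $\A=\Rmod$ (so $\B=\Rmod$, $\C=\RMod$, $F_\A=\pfree$), and the $\kappa$-generated clause is handled by the detour through Prest's theorem \cite[Thm.\,4.5.1]{P2} --- the paper phrases this as (i) being equivalent to all modules being $\Sigma$-pure-injective, so that all pure-exact sequences split and every module (of any cardinality) is pure-projective, which is the same splitting fact you extract from pure-semisimplicity. Your diagnosis that Raynaud--Gruson cannot be pushed past $\aleph_0$ and that the cited global result is what bridges the gap, as well as your Bass-module argument for descending from $\kappa$ back to $\aleph_0$, match the paper's (more tersely stated) reasoning.
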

 
\begin{proof}
 Only the extra clause needs proof. The dcc from (i) is  equivalent to all modules being ($\Sigma$-) pure-injective (or totally transcendental), \cite[Thm.4.5.1]{P2} or \cite[\S 11.1]{P1}. Then all pure-exact sequences split and  hence all modules are also pure-projective. It remains to apply Lemma \ref{lem} to see that now (i) implies all the other conditions for $\kappa$ as indicated. 
 
 For the converse, if any of those conditions hold for $\kappa$, it is not hard to see that they hold for $\aleph_0$ as well.
\end{proof}

Next we apply the theorem to the other extreme, $\A= \{_RR\}$, the  free $R$-module of rank $1$. Then $\B$ consists of all f.g.\ projective modules, $\Add(\A) = \Add(\B)$ of all projective modules, and $\C$ of all flat modules. Remember, $F_{\aleph_0}$ denotes the free module of rank $\aleph_0$ and  $T_{\aleph_0}$ its complete theory.  The notion of perfect ring and the equivalence stated in (vi) below are due to Bass. 

\begin{cor}\label{BassThm}
The following are equivalent for any ring $R$.

\begin{enumerate}[\rm (i)]
\item\label{i} $\RMod$ has the dcc on all pp formulas of any arity $l(\bar x)$ of the form $A|\bar{x}$ where $A$ has $l(\bar x)$ many rows.
\item\label{ii}  All flat modules are Mittag-Leffler.
 \setcounter{enumi}{1} 
 \item{\hspace{-0.5em}}$_T$ All flat models of $\Tfree$  are Mittag-Leffler.
\item\label{iii}  All countably generated flat modules  are projective.
 \setcounter{enumi}{2} 
 \item{\hspace{-0.5em}}$_T$ All countably generated flat models of $\Tfree$  are  projective.
  \item\label{iv}  Every countably generated flat module is isomorphic to a direct summand  of $\free$.
 \setcounter{enumi}{3} 
 \item{\hspace{-0.5em}}$_T$ Every countably generated flat model of $\Tfree$  is isomorphic to a direct summand of $\free$ that is at the same time an elementary substructure.

 \item\label{v}  Every countably generated flat module is purely embedded in  $\free$.
 \setcounter{enumi}{4} 
 \item{\hspace{-0.5em}}$_T$ Every countably generated flat model of $\Tfree$   is elementarily embedded in $\free$.
 \item\label{vi} $R$ is left perfect, i.e., all flat modules are projective. Equivalently, $R$ has the dcc on principal right ideals.
%
\end{enumerate}
Given an infinite cardinal $\kappa$, everywhere above `countably generated' can be replaced by `$\kappa$-generated' if at the same time $\free$ is replaced by $\free^{(\kappa)}$.
 \end{cor}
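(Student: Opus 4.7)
The plan is to apply Theorem \ref{M} with $\A = \{_RR\}$, and then splice in Bass's classical characterization for item (vi). With this choice, $\B = \add(_RR)$ is the class of finitely generated projectives, Lazard's theorem gives $\C = \lim\B$ equal to the class of all flat left $R$-modules, $F_\A = \free$, and $T_\A = \Tfree$, so the ``$\C$-models of $T_\A$'' are exactly the flat models of $\Tfree$.

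Two compatibility checks are needed before Theorem \ref{M} applies verbatim. First, $\Gamma_\B$ should be identified with the divisibility formulas $A|\bar{x}$ of item (i): for such a formula with $A$ an $l(\bar{x}) \times k$ matrix, $(R^k, \bar{b})$ with $\bar{b}_i = \sum_j A_{ij}e_j$ is a free realization in a f.g.\ projective; conversely, any tuple in a f.g.\ projective sits inside some $R^k$ and freely realizes the divisibility formula built from the matrix of its entries. Second, for a flat module $M$, being pure-projective is equivalent to being projective: any epimorphism $P \twoheadrightarrow M$ with $P$ projective is pure since $M$ is flat, and if $M$ is pure-projective the sequence splits. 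Consequently, direct summands of $F_\A = \free$ in items (iv) of Theorem \ref{M} become exactly the projectives here. With these identifications, Theorem \ref{M} yields the equivalences of (i)--(v) and their $T$-subscripted counterparts.

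To bring in (vi), I would invoke Bass's theorem in both directions. Restricting (i) to unary divisibility formulas $r|x$ gives the dcc on principal right ideals, which by Bass characterizes left perfectness, so (i) $\Rightarrow$ (vi). Conversely, if $R$ is left perfect then by Bass every flat module is projective, hence pure-projective and thus Mittag-Leffler by Fact \ref{ML}, establishing (ii). I expect this step to carry the least internal machinery but to require the most careful external citation, which is the main point to get right.

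For the extra $\kappa$-clause, Lemma \ref{lem} is already stated for arbitrary infinite $\kappa$, so the forward direction goes through directly: from (vi) Bass gives every flat module projective, and every $\kappa$-generated projective is a direct summand of $R^{(\kappa)} = \free^{(\kappa)}$, with the $T$-versions coming from Lemma \ref{lem}(2). The converse reduces to the $\aleph_0$-version: any countably generated flat module is $\kappa$-generated, so by hypothesis embeds purely in $\free^{(\kappa)}$; its countably generated image lies in a countable subsum of $\free^{(\kappa)}$ which is a direct summand isomorphic to $\free$, and purity transfers to this intermediate summand since a direct summand of an ambient module is itself pure in it.
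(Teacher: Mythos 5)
Your proposal is correct and follows essentially the same route as the paper: specialize Theorem \ref{M} to $\A=\{_RR\}$, identify $\Gamma_\B$ with the divisibility formulas $A|\bar x$ (the paper cites \cite[Fact 2.8]{MPR} where you argue it directly), use flat${}+{}$pure-projective${}={}$projective to translate item (iii), and invoke Bass's theorem for the equivalence with (vi) and for the $\kappa$-generated extra clause. The only difference is that you fill in a few details the paper delegates to citations or leaves as ``not hard to see'' (the free realization $(R^k,A\bar e)$, the splitting argument, and the reduction of the $\kappa$-clause to the countable case).
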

 
\begin{proof} (i). By \cite[Fact 2.8]{MPR}, the pp formulas that generate a pp type in a f.g.\ projective module are precisely the divisibility formulas $A|\bar{x}$, which is why (i) of the theorem turns into (i) as stated here. In (iii), pure-projective becomes projective, for flat+pure-projective=projective.

(vi). Bass introduced  perfect rings (in terms of perfect cover) and proved the two equivalent descriptions stated in (vi), \cite[Thm.P]{Ba}. 
The dcc on principal right ideals is, in turn, equivalent to $_RR$ having the dcc on pp formulas of the form $a|x$ with $a\in R$.\footnote{T.\,Y.\,Lam says:
\emph{This switch
from left modules to right modules, albeit not new for Bass ..., is
in fact one of the inherent peculiar features of his Theorem...\,. Unfortunately,
because of this unusual switch of sides, [the] Theorem  is often misquoted in the
literature, sometimes even in authoritative sources;...\,\cite[p.\,24]{Lam}.} From the model-theoretic perspective (or in the terminology of p-functors of \cite{Zim}), there is nothing unusual about this switch. In fact, there is none, if we replace \emph{right} principal ideals by (left) pp formulas  that define them (or \emph{left} finitary matrix subgroups), which is the thing to do as our theorem suggests. The only switch of sides then is by the (rather accidental) fact that a \emph{left} pp formula defines a \emph{right} ideal---but that's in \emph{any} ring,  nothing  special about perfect.} So (i) implies (vi).

Conversely, assuming (vi), \emph{all} flat modules are projective, hence (vi) implies any of the conditions (ii) through (v), even in the `$\kappa$-generated' form of the extra clause.

Finally, as before, if any of those conditions hold for $\kappa>\aleph_0$,  they hold also for $\aleph_0$, which concludes the proof.
\end{proof}

 \begin{quest}\label{n=1}
As mentioned in Cor.\ \ref{Sim}(i), for pure-semisimplicity  it suffices to have the dcc just for  $n=1$. The same applies to left perfectness, see Cor.\ \ref{BassThm} (proof of) (vi), for apparently different reasons. This raises the question whether this is always true, i.e., true in Thm.\ \ref{M}(i). (Cf.\ the final remark.)
\end{quest}

\begin{quest}
 In both corollaries, the extra clause extends everything to $\kappa$-generated modules, because in both cases the following question has an affirmative answer---but again, for apparently different reasons. 
 
 Given $\A=\add\A\subseteq\Rmod$ containing $_RR$, if every module in $\C=\lim\A$ is Mittag-Leffler, is every module in $\C$  a direct sum of countably generated modules and thus  pure-projective?
\end{quest}

%
We conclude with the application of the theorem to an intermediate class of f.p.\ modules, namely $\A_{cypr}$, the class of all \texttt{cyclically presented} modules, that is, the class of all modules of the form $R/Rr$ with $r\in R$. 

Set $\B_{cypr}:=\add \A_{cypr}$ and $C_{cypr}:=\lim B_{cypr}$. Write $F_{cypr}$ instead of $F_{\A_{cypr}}$ and $T_{cypr}$ instead of $T_{\A_{cypr}}$. Let further $\Gamma_{cypr}$ be the set of all finite sums of pp formulas of the form 
$\exists y(\bar x\dot= \bar a y\wedge ry\dot= 0)$  of any arity $l(\bar x)$, where $\bar a$ is an $l(\bar x)\times 1$ column vector over $R$.

A module is  \texttt{RD-projective} if it is a direct summand of a direct sum of
cyclically presented modules \cite[Cor.1]{War}. 
Clearly, the class of all of these is $\Add \A_{cypr} =\Add \B_{cypr}$.

%
  
\begin{cor} \label{Cor3} 
 The  following are equivalent for any ring $R$.

\begin{enumerate}[\rm (i)]
\item\label{i} $\RMod$ has the $\Gamma_{cypr}$-dcc.
\item\label{ii} All modules in $\C_{cypr}$ are Mittag-Leffler.
 \setcounter{enumi}{1} 
 \item{\hspace{-0.5em}}$_T$ All $\C_{cypr}$-models of $T_{cypr}$  are Mittag-Leffler.
\item\label{iii}  All countably generated modules in   $\C_{cypr}$ are RD-projective.
 \setcounter{enumi}{2} 
 \item{\hspace{-0.5em}}$_T$ All countably generated $\C_{cypr}$-models of $T_{cypr}$  are  RD-projective.
  \item\label{iv}  Every countably generated module in   $\C_{cypr}$ is isomorphic to a direct summand of $F_{cypr}$.
 \setcounter{enumi}{3} 
 \item{\hspace{-0.5em}}$_T$ Every countably generated $\C_{cypr}$-model of $T_{cypr}$  is isomorphic to a direct summand of $F_{cypr}$ that is an elementary substructure of $F_{cypr}$.

 \item\label{v}  Every countably generated module in   $\C_{cypr}$ is purely embedded in $F_{cypr}$.
 \setcounter{enumi}{4} 
 \item{\hspace{-0.5em}}$_T$ Every countably generated $\C_{cypr}$-model of $T_{cypr}$   is elementarily embedded in $F_{cypr}$.
%
\end{enumerate}

\end{cor}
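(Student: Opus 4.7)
The plan is to derive Corollary \ref{Cor3} as a direct instance of Theorem \ref{M} applied to $\A=\A_{cypr}$, $\B=\B_{cypr}=\add\A_{cypr}$, $\C=\C_{cypr}=\lim\B_{cypr}$. With these substitutions, items (ii)--(v) and their $T$-subscripted counterparts in the corollary already match the corresponding items of the theorem verbatim. Only two points require separate verification: (a) the set $\Gamma_{\B_{cypr}}$ appearing implicitly in clause (i) of the theorem coincides with the explicit set $\Gamma_{cypr}$ defined before the corollary; and (b) in clause (iii) of the theorem the word ``pure-projective'' can be replaced, for modules in $\C_{cypr}$, by ``RD-projective''. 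Once (a) and (b) are in place, the corollary follows by reading off the theorem.

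For (a), since $\B_{cypr}\subseteq\Add(\Rmod)$, Notation \ref{gen} tells us that $\Gamma_{\B_{cypr}}$ is the set of pp formulas freely realized by some pointed module in $\B_{cypr}$. The key observation is that for $r\in R$ the pointed module $(R/Rr,\, 1+Rr)$ is a free realization of the unary formula $ry\doteq 0$. Given a column vector $\bar a$ over $R$ of length $n$, the tuple $\bar a(1+Rr)\in (R/Rr)^n$ is then a free realization of $\exists y(\bar x\doteq\bar a y\wedge ry\doteq 0)$. A tuple in a finite direct sum $\bigoplus_{i\le k}R/Rr_i$ decomposes componentwise, and the direct sum of free realizations freely realizes the sum of the corresponding pp formulas; this yields precisely the shape of formulas listed in the definition of $\Gamma_{cypr}$. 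Finally, since every $B\in\B_{cypr}=\add\A_{cypr}$ is a direct summand of such a finite direct sum, and direct summands are pure, the pp type and its generator are preserved, so $\Gamma_{\B_{cypr}}=\Gamma_{cypr}$.

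For (b), a countably generated pure-projective module $K\in\C_{cypr}$ is, by Lemma \ref{lem}(1) with $\kappa=\aleph_0$, isomorphic to a direct summand of $F_{cypr}^{(\aleph_0)}\cong F_{cypr}$, which is itself a direct sum of cyclically presented modules; hence $K$ is RD-projective. Conversely, an RD-projective module is by definition a direct summand of a direct sum of cyclically presented modules, so it lies in $\Add\A_{cypr}$ and is pure-projective; when it additionally sits inside $\C_{cypr}$, no further argument is needed, since the theorem's clause (iii) only speaks of pure-projectivity of modules already in $\C_{cypr}$. Both directions between (iii) and its $T$-subscripted version transfer analogously, and the passage from (iii) to (iv) via Lemma \ref{lem}(1) then threads the RD-projective structure directly through $F_{cypr}$.

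The main obstacle is the clean verification of (a): one must be careful that every tuple in every module of $\B_{cypr}$ produces a generator of exactly the form prescribed by $\Gamma_{cypr}$, and that conversely every finite sum of formulas of that type is genuinely freely realized somewhere in $\B_{cypr}$. Once this bookkeeping is settled, the rest is a mechanical specialization of Theorem \ref{M}, with Lemma \ref{lem} handling the translation between pure-projectivity, RD-projectivity, and direct summands of $F_{cypr}$.
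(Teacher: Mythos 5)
Your proposal is correct and matches the paper's (implicit) argument: the corollary is stated there without separate proof precisely because it is the specialization of Theorem \ref{M} to $\A_{cypr}$, with the two identifications you verify --- that $\Gamma_{\B_{cypr}}$ is the explicitly described set $\Gamma_{cypr}$ (via the free realizations $(R/Rr, 1+Rr)$ of $ry\dot=0$ and closure of free realizations under scalar images, finite direct sums, and direct summands), and that for countably generated members of $\C_{cypr}$ pure-projectivity coincides with RD-projectivity by Lemma \ref{lem}(1) together with $\Add\A_{cypr}=\Add\B_{cypr}$ --- being exactly the bookkeeping the paper leaves to the reader. Nothing further is needed.
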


 We refrain from stating other analogous corollaries of the theorem and conclude with but one more example, one in which the corresponding dcc reduces to one on unary pp formulas, as discussed in Question \ref{n=1}.

\begin{rem}

\begin{enumerate}[\rm (a)]
 \item Let  $\A_{cyc}$ be the class of all cyclic f.p.\ modules, i.e., of modules of the form $R/I$ with $I$ a f.g.\ left ideal. For every $n>0$, let $\Gamma_n$ be the set of all finite sums of formulas of the form  $\exists y(\bar x\dot= \bar a y\wedge \bar b y\dot= 0)$ with $\bar a$ an $n \times 1$-column vector and $\bar b$ an arbitrary column vector over $R$. As is easily verified, this formula is freely realized by $\bar a$ in the module $R/I$, where $I$ is the left ideal generated by (the entries in) $\bar b$. Hence $\Gamma_n$ is the set of formulas that generate a pp $n$-type realized in a module from $\add \A_{cyc}$. Thus $\Gamma_{\A_{cyc}}=\bigcup_{n>0} \Gamma_n$. 

 \item Consider pp formulas $\phi(\bar x, \bar y)$ and  $\psi(\bar x, \bar y)$. Due to the additivity of pp formulas (as functors), one has $\psi(\bar x, \bar y)\leq \phi(\bar x, \bar y)$ if and only if $\psi(\bar 0, \bar y)\leq \phi(\bar 0, \bar y)$ and $\exists \bar y \psi(\bar x, \bar y)\leq \exists \bar y \phi(\bar x, \bar y)$. Therefore, a descending chain $\phi_0(\bar x, \bar y)\geq \phi_1(\bar x, \bar y)\geq \phi_2(\bar x, \bar y)\geq \ldots$ stabilizes if and only if the descending chains $\phi_0(\bar 0, \bar y)\geq \phi_1(\bar 0, \bar y)\geq \phi_2(\bar 0, \bar y)\geq \ldots$ and $ \exists \bar y\phi_0(\bar x, \bar y)\geq  \exists \bar y\phi_1(\bar x, \bar y)\geq  \exists \bar y\phi_2(\bar x, \bar y)\geq \ldots$ stabilize.

 \item Thus, if $\Gamma$ is a set of pp formulas that is closed under projections (i.e., if $\phi(\bar x, \bar y)\in \Gamma$, then $\exists \bar y \phi(\bar x, \bar y)\in \Gamma)$ and under kernels (i.e., if $\phi(\bar x, \bar y)\in \Gamma$, then $ \phi(\bar 0, \bar y)\in\Gamma$), then the $\Gamma$-dcc is equivalent to the dcc on unary formulas from $\Gamma$.
 
\item  If $\phi(\bar x, \bar y)$ generates the pp type of $(\bar a, \bar b)$ (in a certain module), then clearly $\exists \bar y \phi(\bar x, \bar y)$ generates the pp type of $\bar a$. Hence any set of pp formulas of the form $\Gamma_\G$ (cf.\ Notation \ref{gen}) is closed under arbitrary projections in the sense above. Therefore, to show that the $\Gamma_\G$-dcc reduces to the dcc on unary formulas from $\Gamma_\G$, it suffices to  verify closedness under `unary kernels', i.e., that if $\phi(x, \bar y)\in \Gamma_\G$, then also $\phi(0, \bar y)\in \Gamma_\G$.
\item $\Gamma_{\A_{cyc}}$ is so closed.  For, let $\phi(x, \bar y)$ be the formula $\exists z((x,\bar y)^t\dot= (a, \bar b)^t z\wedge \bar c z\dot= 0)$ (where $t$ stands for transpose), which is equivalent to 
$\exists z(x\dot=az \wedge \bar y\dot=\bar b z\wedge \bar c z\dot= 0)$. Its kernel is $\exists z(0\dot=az \wedge \bar y\dot=\bar b z\wedge \bar c z\dot= 0)$, which is indeed in $\Gamma_{\A_{cyc}}$. Namely, if $I$ is the left ideal generated by $\bar c$, i.e., $(a, \bar b)$ freely realizes $\phi(x, \bar y)$ in $R/I$, then  $\bar b$ freely realizes $\phi(0, \bar y)$ in $R/J$, where $J$ is the left ideal generated by $a$ and $\bar c$.

Consequently, in the notation of (a) above, the $\Gamma_1$-dcc implies the $\Gamma_{\A_{cyc}}$-dcc.

\item Finally, the equivalence \emph{(i)$\Leftrightarrow$(ii)} of the theorem can now be stated as follows (and similarly for the other items in the theorem).

$\RMod$ (equivalently $\Add({\A_{cyc}})$) has the $\Gamma_1$-dcc if and only if every direct limit of cyclic f.p.\ modules is Mittag-Leffler.



\end{enumerate}

\end{rem}

\noindent
Anand Pillay

Department of Mathematics University of Notre Dame, Notre Dame, IN 46556, USA

e-mail: Anand.Pillay.3@nd.edu\\

\noindent
Philipp Rothmaler

Department of Mathematics, The CUNY  Graduate Center, New York, NY 10016, USA

e-mail: philipp.rothmaler@bcc.cuny.edu

\end{document}